\newtheorem{thm}{Theorem}[section]
\newtheorem*{thm*}{Theorem}
\newtheorem{cor}[thm]{Corollary}
\newtheorem{lem}[thm]{Lemma}
\theoremstyle{definition}
\newtheorem{dfn}[thm]{Definition}
\newtheorem*{dfn*}{Definition}
\newtheorem{rem}[thm]{Remark}
\newtheorem{ex}[thm]{Example}
\theoremstyle{remark}
\newtheorem*{claim*}{Claim}
\numberwithin{equation}{thm}
\def\m{\mathfrak{m}}
\def\p{\mathfrak{p}}
\def\h{\mathfrak{ht}}
\def\X{\mathcal{X}}
\def\om{\omega}
\def\d{\delta}
\def\H{\mathsf{H}}
\def\tr{\mathsf{Tr}}
\def\mn{\mathsf{min}}
\def\Ext{\mathsf{Ext}}
\def\End{\mathsf{End}}
\def\rad{\mathsf{rad}}
\def\Hom{\mathsf{Hom}}
\def\mod{\mathsf{mod}}
\def\cm{\mathsf{MCM}}
\def\syz{\mathsf{\Omega}}
\def\dim{\operatorname{\mathsf{dim}}}
\def\depth{\operatorname{\mathsf{depth}}}
\def\ass{\operatorname{\mathsf{Ass}}}
\def\add{\operatorname{\mathsf{add}}}
\def\supp{\operatorname{\mathsf{Supp}}}
\def\cod{\operatorname{\mathsf{codim}}}
\def\depth{\operatorname{\mathsf{depth}}}
\def\h{\operatorname{\mathsf{ht}}}
\def\im{\operatorname{\mathsf{Im}}}
\def\ker{\operatorname{\mathsf{Ker}}}
\def\cok{\operatorname{\mathsf{Coker}}}
\title[The structure of MCM-preenvelopes]{The structure of preenvelopes with respect to maximal Cohen-Macaulay modules}
\author{Hiroki Matsui} 
\address{Graduate School of Mathematics, Nagoya University, Furocho, Chikusaku, Nagoya, Aichi 464-8602, Japan}
\email{m14037f@math.nagoya-u.ac.jp}
\date{\today}
\thanks{2010 {\em Mathematics Subject Classification.} 13C14, 13C60}
\thanks{{\em Key words and phrases.} envelope, special preenvelope, maximal Cohen-Macaulay module}
\begin{document}
\begin{abstract}
This paper studies the structure of special preenvelopes and envelopes with respect to maximal Cohen-Macaulay modules.
We investigate the structure of them in terms of their kernels and cokernels.
Moreover, using this result, we also study the structure of special proper coresolutions with respect to maximal Cohen-Macaulay modules over a Henselian Cohen-Macaulay local ring.
\end{abstract}
\maketitle
\section{Introduction}
Throughout this paper, we assume that $(R, \m, k)$ is a $d$-dimensional Cohen-Macaulay local ring with canonical module $\om$. 
All $R$-modules are assumed to be finitely generated. 
Denote by $\mod{R}$ the category of finitely generated $R$-modules and by $\cm$ the full subcategory of
$\mod{R}$ consisting of maximal Cohen-Macaulay $R$-modules. 

We define $(-)^\dagger:=\Hom_R(-, \om)$ and $\d_M:M \to M^{\dagger\dagger}$ as a natural homomorphism for an $R$-module $M$.
Note that if $M$ is maximal Cohen-Macaulay, $\d_M$ is an isomorphism.

Let $\X$ be a full subcategory of $\mod{R}$. 
The notion of $\X$-(pre)covers and $\X$-(pre)envelopes have been playing an important role in the representation theory of algebras; see \cite{AR, AS1, ASS, E, EJ, R3} for instance.  
For $\X=\cm$, a celebrated theorem due to Auslander and Buchweitz \cite{AB} says that for any $R$-module $M$, there exists a short exact sequence 
$$
0 \to Y \xrightarrow{f} X \xrightarrow{\pi} M \to 0
$$
where $X$ is maximal Cohen-Macaulay and $Y$ has finite injective dimension.
The map $\pi$ is called a maximal Cohen-Macaulay approximation of $M$.
Then $\pi$ is an $\cm$-precover of $M$, and is an $\cm$-cover if $Y$ and $X$ have no non-zero common direct summand via $f$. 
If $R$ is Henselian, every $R$-module has an $\cm$-cover; see \cite{R, Y}.

In this paper we mainly study the $\cm$-envelope, and the $\cm$-preenvelope which is called special.
A result of Holm \cite[Theorem A]{H} states that every $R$-module has a special $\cm$-preenvelope, and if $R$ is Henselian, every $R$-module has an $\cm$-envelope.
It is natural to ask when a given homomorphism is a special $\cm$-preenvelope or an $\cm$-envelope,
and we give an answer to this question by using the kernels and cokernels. 
Our first main result is the following theorem.
\begin{thm}\label{main1}
Let $\mu: M \to X$ be an $R$-homomorphism such that $X$ is maximal Cohen-Macaulay.
\begin{enumerate}[\rm(1)]
\item The following are equivalent.
	\begin{enumerate}[\rm(a)]
\item $\mu$ is a special $\cm$-preenvelope of $M$.
	\item $\cod(\ker{\mu}) >0$ and $\Ext_R^1(\cok{\mu}, \cm) =0 $.
	\item $\cod(\ker{\mu}) > 0$, and there exists an exact sequence 
	$
	0 \to S \to \cok{\mu} \to T \to U \to 0
	$
	such that
	\begin{itemize}
	\item $\cod{S} > 1$,
	\item $\cod{U} > 2$, 
	\item $T$ satisfies $(S_2)$,
	\item $T^\dagger$ has finite injective dimension and satisfies $(S_3)$.
	\end{itemize}
	\end{enumerate}
\item The following are equivalent if $R$ is Henselian.
	\begin{enumerate}[\rm(a)]
\item $\mu$ is an $\cm$-envelope of $M$.
	\item $\cod(\ker{\mu}) >0$, $\Ext_R^1(\cok{\mu}, \cm) =0 $ and $\cok(\mu^{\dagger\dagger})$ has no free summand.
	\item $\cod(\ker{\mu}) > 0$, and there exists an exact sequence 
	$
	0 \to S \to \cok{\mu} \xrightarrow{u} T \to U \to 0
	$
	such that
	\begin{itemize}
	\item $\cod{S} > 1$,
	\item $\cod{U} > 2$,
	\item $T$ satisfies $(S_2)$, 
	\item $T^\dagger$ has finite injective dimension and satisfies $(S_3)$, 
	\item $\im{u}$ has no non-zero free summand.
	\end{itemize}
	\end{enumerate}
\end{enumerate}
\end{thm}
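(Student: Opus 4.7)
The plan is to handle (a) $\Leftrightarrow$ (b) of part (1) directly from the definition of special preenvelope, then derive (b) $\Leftrightarrow$ (c) via the $\om$-duality structure, and finally bootstrap to part (2) by adding the minimality condition.

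For (a) $\Rightarrow$ (b), I would split the preenvelope condition into two parts: every $\phi: M \to X'$ with $X' \in \cm$ must vanish on $\ker{\mu}$ (so as to factor through $\im{\mu}$), and every map $\im{\mu} \to X'$ must extend along $\im{\mu} \hookrightarrow X$. The first part amounts to $\Hom(\ker{\mu}, X') = 0$ for all $X' \in \cm$; testing with $X' = \om$ and noting that at a minimal prime $\p$ the module $\om_\p$ is the injective hull of the residue field of $R_\p$, so Matlis duality forces $(\ker{\mu})_\p = 0$ at every minimal $\p$. The converse uses Ischebeck's vanishing. Thus this first part is equivalent to $\cod(\ker{\mu}) > 0$. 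The second part, from the $\Hom$-long exact sequence of $0 \to \im{\mu} \to X \to \cok{\mu} \to 0$, is equivalent to $\Ext_R^1(\cok{\mu}, X') = 0$ for all $X' \in \cm$, which is precisely the special condition; the converse reads the same calculation backward.

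For (b) $\Leftrightarrow$ (c), set $N := \cok{\mu}$ and study when $\Ext_R^1(N, \cm) = 0$. Splitting the four-term sequence $0 \to S \to N \to T \to U \to 0$ into two short exact sequences and applying Ischebeck's vanishing together with $\cod{S} > 1$ and $\cod{U} > 2$ gives $\Hom(S, \cm) = 0 = \Ext_R^1(S, \cm)$ and $\Ext_R^i(U, \cm) = 0$ for $i \leq 2$. This pins down $\Ext_R^1(N, X') \cong \Ext_R^1(T, X')$ for any $X' \in \cm$, so it suffices to see that the assumptions on $T$ and $T^\dagger$ force $\Ext_R^1(T, \cm) = 0$. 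In the reverse direction, starting from the Ext-vanishing on $N$, a natural candidate is $T := N^{\dagger\dagger}$ with $u = \d_N$, giving $S = \ker{\d_N}$ and $U = \cok{\d_N}$; the codimension bounds on $S$ and $U$ are standard consequences of $\om$-duality, $T$ is automatically $(S_2)$, and the Ext-vanishing hypothesis on $N$ is what propagates to the statement that $T^\dagger \cong N^\dagger$ has finite injective dimension with $(S_3)$.

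For part (2), part (1) reduces matters to detecting when the preenvelope is actually minimal, i.e.\ an envelope. Over a Henselian local ring the endomorphism ring of $X \in \cm$ is semiperfect, so failure of minimality manifests as the existence of a free direct summand of $X$ that can be split off compatibly with $\mu$. This removable summand corresponds to a free summand of $\cok(\mu^{\dagger\dagger})$ in formulation (b) and, equivalently, to a free summand of $\im{u}$ in formulation (c), once one uses the identification $T \cong N^{\dagger\dagger}$ from the construction of the four-term sequence and the fact that $\im{u} = \im{\d_N}$ captures the ``reflexive image'' of $X$ modulo $\im{\mu}$.

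The main obstacle I anticipate is the direction (c) $\Rightarrow$ (b) in part (1): proving that the combined hypotheses on $T$ and $T^\dagger$ indeed imply $\Ext_R^1(T, \cm) = 0$. The two hypotheses are of different flavors --- $T$ being $(S_2)$ is a depth condition on $T$, while $T^\dagger$ having finite injective dimension with $(S_3)$ is an $\om$-like condition coupled with an auxiliary depth boost on the dual. Bridging them will likely require either a careful local duality computation, translating Ext-vanishing against $X' \in \cm$ into a statement about the local cohomology of $T^\dagger$, or a spectral sequence argument combining the MCM approximation of $X'$ with the finite injective resolution of $T^\dagger$. The reverse construction, showing that $(S_3)$ on $N^\dagger$ follows from the Ext-vanishing on $N$, will demand comparable care.
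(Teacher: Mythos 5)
Your outline identifies the right objects --- in particular your candidate $T:=N^{\dagger\dagger}$, $u=\d_N$ with $N=\cok\mu$ agrees (up to the identification $N^\dagger\cong\ker(\mu^\dagger)$) with the sequence the paper constructs --- but it is not yet a proof, and the step you yourself flag as the main obstacle is precisely where the decisive ingredient is missing. The bridge between the ``two flavors'' of hypotheses on $T$ and $T^\dagger$ is Holm's duality criterion (Lemma \ref{lem1}(1)): $\Ext_R^1(L,\cm)=0$ if and only if $\Ext_R^1(L,\om)=0$ and $\Ext_R^1(\cm,L^\dagger)=0$. With it, the implication (c)$\Rightarrow$(a) closes quickly: $\Ext_R^1(\cm,(\im u)^\dagger)=0$ because $(\im u)^\dagger\cong T^\dagger$ has finite injective dimension, and $\Ext_R^1(\im u,\om)=0$ follows from $\Ext_R^1(T,\om)=0$ (using $T\cong T^{\dagger\dagger}$ and the $(S_2)$/$(S_3)$ hypotheses) together with $\Ext_R^2(U,\om)=0$ from $\cod U>2$ via local duality. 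Without this lemma, or an equivalent substitute, your plan does not close, and neither a bare local-duality computation nor a spectral sequence is spelled out. A second genuine gap: your assertion that the codimension bounds on $S=\ker\d_N$ and $U=\cok\d_N$ are ``standard consequences of $\om$-duality'' is not correct at the strength required. The standard bounds (from $\ker\d_N\cong\Ext_R^1(\tr_\om N,\om)$, $\cok\d_N\cong\Ext_R^2(\tr_\om N,\om)$ and the injective dimension of $\om_\p$) give only $\cod\ge 1$ and $\cod\ge 2$ respectively, one less than the theorem demands; the improvement must be extracted from the hypothesis $\Ext_R^1(N,\cm)=0$. The paper avoids this by presenting the same sequence differently, as $0\to\cok\d_M\to\cok\mu\to Y^\dagger\to\Ext_R^1(M^\dagger,\om)\to0$ with $Y=\ker(\mu^\dagger)$, for which $\cod(\cok\d_M)\ge 2$ is genuinely standard and $\cod\Ext_R^1(M^\dagger,\om)>2$ follows from $M^\dagger$ satisfying $(S_2)$.

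For part (2) your picture is directionally right but two points need justification. First, you must show that the ``removable'' summand distinguishing a special preenvelope from the envelope is actually \emph{free}, not merely maximal Cohen--Macaulay: the paper proves the excess summand $\cok f$ is maximal Cohen--Macaulay with $(\cok f)^\dagger$ of finite injective dimension, hence in $\add\om$ after dualizing, hence free. Second, to transfer ``$\im u$ has no free summand'' into minimality of $\mu$ you need a uniqueness statement for the four-term sequence (the paper's Lemma \ref{lem2}, resting on $\Hom_R(L,T)=0$ when $\cod L>0$ and $T$ satisfies $(S_1)$), so that the free excess summand is forced to appear inside $\im u$ and can then be ruled out. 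Neither of these appears in your sketch.
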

The conditions (c) in (1) and (2) not only clarify the structure of special $\cm$-preenvelopes and $\cm$-envelopes, but also have the advantage that they do not contain vanishing conditions of Ext modules, which are in general hard to verify. 
We construct concrete examples of $\cm$-(pre)envelopes by using Theorem \ref{main1}.
Moreover, applying this result, we give another characterization of special $\cm$-preenvelopes in terms of the existence of certain complexes, which is our second main result.

\begin{thm}\label{main2}
Let $\mu: M \to X$ be an $R$-homomorphism such that $X$ is maximal Cohen-Macaulay. Then the following are equivalent.
\begin{enumerate}[\rm(1)]
\item $\mu$ is a special $\cm$-preenvelope of $M$.
\item There exists an $R$-complex $C=(0 \to C^{-1} \xrightarrow{d^{-1}} C^0 \xrightarrow{d^{0}} C^1 \xrightarrow{d^1} C^2 \xrightarrow{d^2} \cdots \xrightarrow{d^{d-3}} C^{d-2} \to 0)$ with $C^i$ free for $1 \le i \le d-2$ and $d^{-1}= \mu$ such that $\cod{\H^i(C)} >i+1$ for any $i$.
\end{enumerate}
\end{thm}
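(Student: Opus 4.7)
Both directions reduce to applying Theorem~\ref{main1}(1): the forward direction is a cohomological computation, while the reverse requires constructing the complex from the structural characterization (c). The key technical ingredient throughout is the vanishing
\[
\Ext_R^j(L, W) = 0 \text{ for every } j < \cod L,
\]
valid for any finitely generated $L$ and any $W \in \cm$. This follows from local duality: for $\p \in \supp L$, $W_\p$ is MCM over $R_\p$, so $\operatorname{grade}(\p, W) = \h \p \ge \cod L$, and d\'evissage finishes the argument. Applied to $L = \H^i(C)$ with $\cod \H^i(C) > i+1$, it gives $\Ext_R^{i+1}(\H^i(C), W) = 0$ for every $i$.

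\textbf{Proof of (2)$\Rightarrow$(1).} The condition on $\H^{-1}(C)$ is exactly $\cod(\ker \mu) > 0$. Fix $W \in \cm$; I verify $\Ext_R^1(\cok \mu, W) = 0$. Set $K^i := \ker d^i$ and $Z^i := \im d^i$. The complex breaks into short exact sequences
\[
0 \to \H^0(C) \to \cok \mu \to Z^0 \to 0,\quad 0 \to Z^{i-1} \to K^i \to \H^i(C) \to 0,\quad 0 \to K^i \to C^i \to Z^i \to 0,
\]
with $K^{d-2} = C^{d-2}$ at the top. Applying $\Hom_R(-, W)$ and chasing the long exact sequences, freeness of $C^i$ gives $\Ext_R^{\ge 1}(C^i, W) = 0$, while the displayed vanishing kills each $\Ext_R^{i+1}(\H^i(C), W)$. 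An inductive chase shows that $\Ext_R^1(\cok \mu, W)$ is a subquotient of $\Ext_R^{i+1}(Z^i, W)$ for each $i$ in range; taking $i = d-3$ and using the topmost SES forces $\Ext_R^{d-2}(Z^{d-3}, W) = 0$, hence $\Ext_R^1(\cok \mu, W) = 0$. Theorem~\ref{main1}(1)(b)$\Rightarrow$(a) now gives (1).

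\textbf{Proof of (1)$\Rightarrow$(2).} Apply Theorem~\ref{main1}(1)(a)$\Rightarrow$(c) to get a 4-term exact sequence $0 \to S \to \cok \mu \to T \to U \to 0$ with $\cod S > 1$, $\cod U > 2$, $T$ satisfying $(S_2)$, and $T^\dagger$ of finite injective dimension satisfying $(S_3)$. The plan is to leverage these properties of $T$ to build a bounded complex of free modules coresolving $T$ up to cohomology of small support, and then splice with $\mu$ and the 4-term sequence. Since $T$ satisfies $(S_2)$ we have $T \cong T^{\dagger\dagger}$; combined with the finite injective dimension of $T^\dagger$ and its $(S_3)$ property, $\omega$-duality produces a finite resolution whose terms can be approximated by free modules, with defects of high codimension. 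Assembling the pieces, the submodule $S$ and quotient $U$ appear as cohomology in the lowest degrees (codim $> 1$ and $> 2$ respectively), while the cohomology in higher degrees comes from Ext-obstructions controlled by the small-support estimates.

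\textbf{Main obstacle.} The substantial step is the (1)$\Rightarrow$(2) construction: one must produce the free modules $C^1, \ldots, C^{d-2}$ with $\cod \H^i(C) > i + 1$ at every intermediate degree. Since MCM modules need not embed into free modules in general, the existence of such a free coresolution requires careful exploitation of the special form of $T$ and $T^\dagger$ granted by Theorem~\ref{main1}(c), and the precise codimension bounds must be tracked through each splicing. By contrast, (2)$\Rightarrow$(1) is essentially a diagram chase once the Ischebeck-type vanishing is in place.
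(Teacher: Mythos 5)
Your implication (2)$\Rightarrow$(1) is essentially the paper's argument: break the complex into short exact sequences, use the vanishing $\Ext_R^j(L,W)=0$ for $j<\cod L$ and $W$ maximal Cohen--Macaulay, and chase $\Ext_R^1(\cok\mu,W)$ upward through the free terms until it lands in a group that vanishes, then invoke Theorem \ref{main1}(1)(b)$\Rightarrow$(a). That part is sound, modulo the degenerate cases $d\le 2$, which the paper treats separately and which you should at least acknowledge.

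The direction (1)$\Rightarrow$(2) is a genuine gap: what you have written is a plan, not a proof, and the plan does not point at the construction that actually works. You propose to start from the four-term sequence of Theorem \ref{main1}(1)(c) and to ``build a bounded complex of free modules coresolving $T$ up to cohomology of small support'' via ``$\om$-duality'' and ``approximating terms by free modules with defects of high codimension.'' None of this is carried out, and it is unclear how it could be: the modules $T$ and $T^\dagger$ need not embed into free modules, no mechanism is given for producing free terms at all, and nothing in your sketch accounts for the simultaneous bounds $\cod\H^i(C)>i+1$ for all $1\le i\le d-2$. The missing idea is to dualize at the outset rather than at the level of $T$. By Lemma \ref{lem1}(2), $\mu^\dagger:X^\dagger\to M^\dagger$ is a special $\cm$-precover, so its kernel has finite injective dimension and one obtains an exact sequence $0\to W_{d-2}\to\cdots\to W_1\to X^\dagger\to M^\dagger\to 0$ with $W_i\in\add\om$; applying $(-)^\dagger=\Hom_R(-,\om)$ sends $\add\om$ to free modules, which is exactly where the $C^i$ come from. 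The cohomology of the resulting complex is then identified with $\Ext_R^1(U_{i-2},\om)$ in degrees $i\ge 1$ (and with $\ker\d_M$, $\cok\d_M$, i.e.\ $\Ext_R^1(\tr_\om M,\om)$ and $\Ext_R^2(\tr_\om M,\om)$, in degrees $-1$ and $0$), where the $U_j$ are the syzygies of the $\add\om$-resolution; the depth lemma forces $U_{i-2}$ to be locally maximal Cohen--Macaulay in codimension at most $i+1$, which yields $\cod\H^i(C)>i+1$. Your outline contains none of these steps, and your ``main obstacle'' paragraph concedes that the essential construction is missing.
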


A special proper $\cm$-coresolution (resp. minimal proper $\cm$-coresolution) is such a complex that is built by taking special $\cm$-preenvelopes (resp. $\cm$-envelopes) repeatedly.
Using Theorem \ref{main1}, we prove the following result on the structure of special proper $\cm$-coresolutions as our third main result. 
\begin{thm}\label{main3}
Suppose that $R$ is Henselian. Let $M$ be an $R$-module and
$$
0 \to M \xrightarrow{d^{0}} X^0 \xrightarrow{d^1} X^1 \xrightarrow{d^2} \cdots
$$
a special proper $\cm$-coresolution of $M$.
Put $\mu^0:=d^0$ and let $\mu^i: \cok d^{i-1} \to X^i$ be the induced homomorphisms.
Then for each $i$ one has $\cod(\ker{\mu^i})> i$, and there exists an exact sequence 
$$
0 \to S^i \to \cok \mu^i \to T^i \to U^i \to 0 
$$
such that
	\begin{itemize}
	\item $\cod{S^i} > i+1$,
	\item $\cod{U^i} > i+2$, 
	\item $T^i$ satisfies $(S_2)$, 
	\item $(T^i)^\dagger$ has finite injective dimension and satisfies $(S_{i+3})$.
	\end{itemize}
\end{thm}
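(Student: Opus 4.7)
The plan is to prove Theorem \ref{main3} by induction on $i$. The base case $i=0$ is immediate: $\mu^0 = d^0$ is a special $\cm$-preenvelope by the definition of a special proper $\cm$-coresolution, so Theorem~\ref{main1}(1) applied to $\mu^0$ directly supplies $\cod(\ker\mu^0) > 0$ together with an exact sequence $0 \to S^0 \to \cok\mu^0 \to T^0 \to U^0 \to 0$ satisfying the four bullets for $i=0$ (noting $(S_{0+3})=(S_3)$).

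For the inductive step $i \leadsto i+1$, the map $\mu^{i+1}\colon \cok\mu^i \to X^{i+1}$ is again a special $\cm$-preenvelope by construction, and Theorem~\ref{main1} gives the baseline bounds (codimensions $>0,1,2$ and $(S_3)$); the task is to upgrade these using the inductive structure of $\cok\mu^i$. For the kernel, splitting the inductive sequence as $0 \to S^i \to \cok\mu^i \to K \to 0$ and $0 \to K \to T^i \to U^i \to 0$ yields $\ass(\cok\mu^i) \subseteq \ass(S^i) \cup \ass(T^i)$. Every $\p \in \ass(S^i)$ has $\h\p > i+1$ (from $\cod S^i > i+1$), and every $\p \in \ass(T^i)$ is a minimal prime of $R$ because the $(S_2)$ condition forces depth-$0$ primes to have height $0$. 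Since $\ker\mu^{i+1} \subseteq \cok\mu^i$ and Theorem~\ref{main1} provides $\cod(\ker\mu^{i+1}) > 0$ (excluding minimal primes), every associated prime of $\ker\mu^{i+1}$ has height $>i+1$, giving $\cod(\ker\mu^{i+1}) > i+1$.

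For the exact sequence, the essential step is to establish the fundamental short exact sequence
\[
0 \to (\cok\mu^{i+1})^\dagger \to (X^{i+1})^\dagger \to (T^i)^\dagger \to 0.
\]
Dualizing $0 \to \im\mu^{i+1} \to X^{i+1} \to \cok\mu^{i+1} \to 0$ and using $\Ext_R^1(\cok\mu^{i+1},\om) = 0$ (from the special preenvelope property) yields $0 \to (\cok\mu^{i+1})^\dagger \to (X^{i+1})^\dagger \to (\im\mu^{i+1})^\dagger \to 0$. Because $\cod(\ker\mu^{i+1}) > 0$ forces $(\ker\mu^{i+1})^\dagger = 0$, one has $(\im\mu^{i+1})^\dagger \cong (\cok\mu^i)^\dagger$; and a routine $\Ext$-chase along the inductive exact sequence, using $\Ext_R^j(S^i,\om) = \Ext_R^j(U^i,\om) = 0$ for $j \le i+1$ (forced by the high codimensions of $S^i$ and $U^i$), identifies $(\cok\mu^i)^\dagger \cong (T^i)^\dagger$. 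In the resulting sequence the middle term is MCM and the right term satisfies $(S_{i+3})$ by induction, so the local depth lemma yields $\depth_{\p}(\cok\mu^{i+1})^\dagger \ge \min(\h\p, i+4)$ for every $\p$, i.e.\ $(\cok\mu^{i+1})^\dagger$ satisfies $(S_{i+4})$. Setting $T^{i+1} := (\cok\mu^{i+1})^{\dagger\dagger}$ with biduality map $\d_{\cok\mu^{i+1}}$, and $S^{i+1} := \ker\d_{\cok\mu^{i+1}}$, $U^{i+1} := \cok\d_{\cok\mu^{i+1}}$, produces the desired exact sequence: $T^{i+1}$ is reflexive hence $(S_2)$, $(T^{i+1})^\dagger \cong (\cok\mu^{i+1})^\dagger$ inherits $(S_{i+4})$, and the codimension bounds $\cod S^{i+1} > i+2$, $\cod U^{i+1} > i+3$ follow by realizing $S^{i+1}$ and $U^{i+1}$ as $\Ext$-type modules and converting the $(S_{i+4})$ condition into codimension statements.

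The main obstacle is verifying that $(T^{i+1})^\dagger$ has finite injective dimension. The fundamental sequence sandwiches it between the MCM module $(X^{i+1})^\dagger$ (which need not have fid unless $R$ is Gorenstein) and the fid module $(T^i)^\dagger$, and neither input alone forces the kernel to have fid via standard closure properties. The most natural resolution is to invoke Auslander--Buchweitz's hull-of-fid approximation of $\cok\mu^{i+1}$ and track the fid/MCM decomposition through the double-dualization, or alternatively to observe that $\cok\mu^{i+1}$ is itself the cokernel of a special preenvelope and so Theorem~\ref{main1}(1)(c) applied at the new step automatically contributes the fid conclusion, which can then be compared with the Serre condition obtained above to complete the construction.
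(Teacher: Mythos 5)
Your overall strategy is genuinely different from the paper's: you run a direct induction on the \emph{given} coresolution, whereas the paper first proves a uniqueness lemma (kernels of special $\cm$-preenvelopes are unique, cokernels unique up to free summands) and then explicitly \emph{constructs} one good coresolution by setting $\mu^i := v\circ u$, where $v\colon T^{i-1}\hookrightarrow W^\dagger$ comes from dualizing an $\add\om$-approximation $0\to A\to W\to (T^{i-1})^\dagger\to 0$; in that construction the bounds fall out for free because $S^i=U^{i-1}$, $T^i=A^\dagger$ and $U^i=\Ext_R^1((T^{i-1})^\dagger,\om)$. Several pieces of your argument are correct and would survive: the associated-prime argument for $\cod(\ker\mu^{i+1})>i+1$, the sequence $0\to(\cok\mu^{i+1})^\dagger\to(X^{i+1})^\dagger\to(T^i)^\dagger\to 0$ together with the depth lemma giving $(S_{i+4})$, and the finite injective dimension of $(\cok\mu^{i+1})^\dagger$ (your second proposed fix is the right one and is immediate: $\mu^{i+1}$ is a special $\cm$-preenvelope, so $\Ext_R^1(\cm,(\cok\mu^{i+1})^\dagger)=0$ by Lemma \ref{lem1}(1), which forces finite injective dimension; this should not be left as an open ``obstacle'').

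The genuine gap is the codimension bounds on $S^{i+1}=\ker\d_{\cok\mu^{i+1}}$ and $U^{i+1}=\cok\d_{\cok\mu^{i+1}}$, which you dismiss with ``realizing them as $\Ext$-type modules and converting the $(S_{i+4})$ condition into codimension statements.'' That mechanism cannot work for $S^{i+1}$: the kernel of the biduality map is annihilated by $(-)^\dagger$, so no Serre condition on $(\cok\mu^{i+1})^\dagger$ can see it, and Lemma \ref{lem4} by itself only yields $\cod(\ker\d_C)>0$. The bound $\cod S^{i+1}>i+2$ has to come from the \emph{previous} step of the induction: one must identify $\ker\d_{\cok\mu^{i+1}}$ with $\cok\d_{\im\mu^{i+1}}\cong\cok\d_{\cok\mu^i}$ (snake lemma applied to $0\to\im\mu^{i+1}\to X^{i+1}\to\cok\mu^{i+1}\to 0$ and its bidual, using that $\d_{X^{i+1}}$ is an isomorphism and $\cod(\ker\mu^{i+1})>i+1$), and then show $\cok\d_{\cok\mu^i}\cong U^i$ by comparing the canonical four-term sequence of $\cok\mu^i$ with the inductive one (here Lemma \ref{lem2}-type uniqueness, or the reflexivity of $T^i$ plus $(S^i)^\dagger=\Ext^1_R(S^i,\om)=0$ and $(U^i)^\dagger=\Ext^1_R(U^i,\om)=0$, is needed); only then does the inductive hypothesis $\cod U^i>i+2$ deliver the bound. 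Similarly $U^{i+1}\cong\Ext_R^1((T^i)^\dagger,\om)$, whose codimension $>i+3$ comes from the $(S_{i+3})$ condition on $(T^i)^\dagger$ -- again the inductive hypothesis at step $i$, not the $(S_{i+4})$ condition you derive at step $i+1$. These identifications are the real content of the inductive step and are absent from the proposal, so as written the proof does not close.
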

We should remark that our Theorem \ref{main1} guarantees that the converse of the statement of Theorem \ref{main3} also holds:
If a complex of $R$-modules
$$
0 \to M \xrightarrow{d^{0}} X^0 \xrightarrow{d^1} X^1 \xrightarrow{d^2} \cdots
$$
with each $X^i$ maximal Cohen-Macaulay satisfies the conditions in the conclusion of Theorem \ref{main3}, then this is a special proper $\cm$-coresolution of $M$.
Furthermore, it turns out that Theorem \ref{main3} recovers a main theorem of Holm \cite[Theorem C]{H} in the Henselian case.

\section{Preliminaries}
In this section, we give several basic definitions and remarks for later use. 
\begin{dfn}
Let $\X$ be a full subcategory of $\mod{R}$ and $M$ an $R$-module.
\begin{enumerate}[\rm(1)]
\item
Let $\pi: X \to M$ be an $R$-homomorphism such that $X \in \X$.
	\begin{enumerate}[\rm(a)]
	\item $\pi$ is called an {\it $\X$-precover} of $M$ if 
$$
\Hom_R(X', \pi):\Hom_R(X', X) \to	\Hom_R(X', M)
$$
 is an epimorphism for any $X' \in \X$.
	\item $\pi$ is called a {\it special $\X$-precover} of $M$ if $\pi$ is an $\X$-precover and satisfies $\Ext_R^1(\X, \ker\pi)= 0$. 
	\item $\pi$ is called an {\it $\X$-cover} of $M$ if $\pi$ is an $\X$-precover and for any $\phi \in \End_R(X)$, $\pi \phi= \pi$ implies $\phi$ is an automorphism. 
	\end{enumerate} 
\item
Let $\mu: M \to X$ be an $R$-homomorphism such that $X \in \X$.
\begin{enumerate}[\rm(a)]
	\item $\mu$ is called an {\it $\X$-preenvelope} of $M$ if 
$$
\Hom_R(\mu, X'):\Hom_R(X, X') \to \Hom_R(M, X')
$$
 is an epimorphism for any $X' \in \X$.
	\item $\mu$ is called a {\it special $\X$-preenvelope} of $M$ if $\mu$ is an $\X$-preenvelope and satisfies $\Ext_R^1(\cok\mu, \X)= 0$.
	\item $\mu$ is called an {\it $\X$-envelope} of $M$ if $\mu$ is an $\X$-preenvelope and for any $\phi \in \End_R(X)$, $\phi \mu= \mu$ implies $\phi$ is an automorphism. 
	\end{enumerate} 
\end{enumerate} 
\end{dfn}

\begin{rem}
\begin{enumerate}[\rm(1)]
\item
$\X$-precovers are not necessarily epimorphisms in general. If $\X$ contains $R$, then every $\X$-precover is an epimorphism. 
\item
Let $M$ be an $R$-module. An $\X$-cover of $M$ is unique in the following sense: for two $\X$-covers $\pi :X \to M$ and $\pi': X' \to M$ of $M$, there exists an isomorphism $\phi: X \to X'$ such that $\pi' \phi = \pi$.
Dually, an $\X$-envelope of $M$ is unique.
\item
By definition, a special $\X$-precover is an $\X$-precover.  If $\X$ is closed under extensions, then an $\X$-cover is a special $\X$-precover: this result is called Wakamatsu's lemma (see \cite{X}).
The statement where ``cover" is replaced with ``envelope" also holds true.
\item
Let $M$ be an $R$-module. If $\Ext_R^1(\cm, M)=0$, then $M$ has finite injective dimension since $\Ext_R^{d+1}(k, M)\cong \Ext_R^1(\syz^d k, M)=0$. Here, $\syz$ denotes the syzygy functor.
On the other hand, if $M$ has finite injective dimension, then $\Ext_R^1(\cm, M)=0$; see \cite{AB}.
Therefore, a special $\cm$-precover of $M$ is nothing but a maximal Cohen-Macaulay approximation of $M$.
\end{enumerate}
\end{rem}

 \begin{dfn}
 Let $M$ be an $R$-module, and
\begin{align*}
0 \to M \xrightarrow{d^0} X^0 \xrightarrow{d^1} X^1 \xrightarrow{d^2} \cdots \tag{*}
\end{align*}
be an $R$-complex with $X^i \in \cm$ for each $i$. Put $\mu^0:=d^0$, and let $\mu^i:\cok d^{i-1} \to X^i$ be the induced morphisms for $i \ge 1$.
If the $\mu^i$ are $\cm$-preenvelopes (resp. special $\cm$-preenvelopes; resp. $\cm$-envelopes), we call \eqref{***} a {\it proper $\cm$-coresolution} (resp. a {\it special proper $\cm$-coresolution}; resp. a {\it minimal proper $\cm$-coresolution}). 
By virtue of \cite[Theorem A]{H}, we can construct a special proper $\cm$-coresolution of $M$, and if $R$ is Henselian, we can construct a minimal proper $\cm$-coresolution of $M$ for any $M \in \mod{R}$.
 \end{dfn}
 
\begin{dfn}
Let $M$ and $N$ be $R$-modules. 
We define $\rad_R(M, N)$ as the subgroup of $\Hom_R(M, N)$ consisting of homomorphisms $f$ which satisfy the following condition: there is no non-zero direct summand $M'$ of $M$ such that $M'$ is isomorphic to a direct summand of $N$ via $f$.
This definition is equivalent to the definition of $\rad_R$ in \cite{K} if $R$ is Henselian. 
\end{dfn}

\begin{rem}
Let $\X$ be a full subcategory of $\mod{R}$, and let $0 \to K \xrightarrow{f} X \xrightarrow{\pi} M$ be an exact sequence in $\mod{R}$, where $\pi$ is an $\X$-precover.
Suppose that $R$ is Henselian. Then $\pi$ is an $\X$-cover if and only if $f \in\rad_R(K, X)$.
For the proof, see \cite[Proposition 2.4]{R}.
\end{rem}
 
\section{The structure of special $\cm$-preenvelopes}
In this section, we prove our main result on the structure of $\cm$-(pre)envelopes.
We give several lemmas used in the proof of the main theorem.
The first one is due to Holm \cite[Lemma 3.2, Proposition 3.3]{H}.
\begin{lem}{\label{lem1}}
\begin{enumerate}[\rm(1)]
\item
Let $M$ be an $R$-module. Then $\Ext_R^1(M, \cm)=0$ if and only if $\Ext_R^1(M, \om)=0=\Ext_R^1(\cm, M^\dagger)$.
\item
If $\mu:M \to X$ is an $\cm$-preenvelope (resp. a special $\cm$-preenvelope; resp. an $\cm$-envelope), then $\mu^\dagger:X^\dagger \to M^\dagger$ is an $\cm$-precover (resp. a special $\cm$-precover; resp. an $\cm$-cover).
\end{enumerate}
\end{lem}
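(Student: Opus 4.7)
My plan is to base everything on two elementary tools: the Hom-swap isomorphism $\Hom_R(A, \Hom_R(B, \om)) \cong \Hom_R(B, \Hom_R(A, \om))$, and the fact that for $X \in \cm$ dualizing a free presentation $0 \to K \to F \to X \to 0$ yields a short exact sequence $0 \to X^\dagger \to F^\dagger \to K^\dagger \to 0$ (using $\Ext_R^1(X, \om) = 0$), with $F^\dagger$ a finite direct sum of copies of $\om$. Part~(1) will fall out of symmetric manipulations of two long exact sequences, and Part~(2) will reduce to Part~(1) once I identify $\ker\mu^\dagger$.

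For Part~(1), the implication $\Ext_R^1(M,\cm)=0 \Rightarrow \Ext_R^1(M,\om)=0$ is trivial since $\om \in \cm$. To show $\Ext_R^1(X, M^\dagger) = 0$ for $X \in \cm$, I would apply $\Hom_R(-, M^\dagger)$ to a free presentation $0 \to K \to F \to X \to 0$, use the Hom-swap to identify the resulting map $\Hom_R(F, M^\dagger) \to \Hom_R(K, M^\dagger)$ with $\Hom_R(M, F^\dagger) \to \Hom_R(M, K^\dagger)$, and note that since $X^\dagger \in \cm$ the hypothesis forces $\Ext_R^1(M, X^\dagger) = 0$, whence this map is surjective. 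For the converse, given $X \in \cm$ use $X \cong X^{\dagger\dagger}$: dualize a free presentation of $X^\dagger$ to obtain $0 \to X \to F^\dagger \to K^\dagger \to 0$, apply $\Hom_R(M, -)$, and use the Hom-swap together with the hypotheses $\Ext_R^1(X^\dagger, M^\dagger) = 0$ and $\Ext_R^1(M, \om) = 0$ to conclude $\Ext_R^1(M, X) = 0$.

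For Part~(2), I would first check that $\ker\mu^\dagger \cong (\cok\mu)^\dagger$ by factoring $\mu$ as $M \twoheadrightarrow \im\mu \hookrightarrow X$ and dualizing each piece, using $\Ext_R^1(X, \om) = 0$. The preenvelope condition on $\mu$ translates, via the Hom-swap and the fact that $(-)^\dagger$ is a duality on $\cm$, into surjectivity of $\Hom_R(Y, X^\dagger) \to \Hom_R(Y, M^\dagger)$ for every $Y \in \cm$, so $\mu^\dagger$ is an $\cm$-precover. The ``special'' case then follows immediately by applying Part~(1) to $\cok\mu$: $\Ext_R^1(\cok\mu, \cm) = 0$ implies $\Ext_R^1(\cm, (\cok\mu)^\dagger) = 0$, i.e., $\Ext_R^1(\cm, \ker\mu^\dagger) = 0$. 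For the envelope case, given $\phi \in \End_R(X^\dagger)$ with $\mu^\dagger\phi = \mu^\dagger$, I would dualize once more to get $\phi^\dagger \in \End_R(X^{\dagger\dagger})$ with $\phi^\dagger \mu^{\dagger\dagger} = \mu^{\dagger\dagger}$, then use naturality of $\d$ and the isomorphism $\d_X$ to produce $\psi \in \End_R(X)$ with $\psi\mu = \mu$; the envelope property on $\mu$ makes $\psi$, hence $\phi^\dagger$, and finally $\phi$ (by duality of $(-)^\dagger$ on $\cm$), an automorphism.

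The main obstacle will be keeping the bookkeeping of the two Hom-swaps clean and tracking which exact sequence computes which $\Ext$; once the pattern ``dualize a free presentation, apply Hom-swap, invoke hypothesis'' is set up, both halves of (1) and all three sub-cases of (2) come out uniformly.
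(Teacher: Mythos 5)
Your proposal is correct, and in substance it matches the paper where the paper actually gives an argument; the differences are worth a brief note. For part (1) the paper offers no proof at all, deferring to Holm's Lemma 3.2, whereas you supply a complete and correct argument: the syzygy sequence $0 \to K \to F \to X \to 0$ of an $X \in \cm$ dualizes to a short exact sequence $0 \to X^\dagger \to F^\dagger \to K^\dagger \to 0$ because $\Ext_R^1(X,\om)=0$, and the Hom-swap $\Hom_R(A,\Hom_R(B,\om)) \cong \Hom_R(B,\Hom_R(A,\om))$ converts the connecting-map computation of $\Ext_R^1(X,M^\dagger)$ into one controlled by $\Ext_R^1(M,X^\dagger)$, and symmetrically for the converse (where $\Ext_R^1(M,\om)=0$ kills the $\Ext_R^1(M,F'^\dagger)$ term). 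For part (2) the two routes coincide up to repackaging: the paper proves the precover claim by lifting $f^\dagger\d_M$ through $\mu$ and then invoking the identity $(\d_M)^\dagger\d_{M^\dagger} = \mathrm{id}_{M^\dagger}$ from Holm's Lemma 3.1, while you observe that under the Hom-swap the map $\Hom_R(Y,\mu^\dagger)$ is literally identified with $\Hom_R(\mu,Y^\dagger)$ for $Y \in \cm$ -- the same fact, stated functorially; your identification $\ker\mu^\dagger \cong (\cok\mu)^\dagger$ by left exactness of $(-)^\dagger$, the reduction of the special case to part (1), and the envelope-to-cover argument via $\psi = \d_X^{-1}\phi^\dagger\d_X$ and double dualization are exactly the paper's. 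The Hom-swap formulation arguably makes the bookkeeping cleaner and makes part (1) and the precover claim in part (2) instances of one mechanism, at the cost of having to verify the naturality of the swap once.
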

\begin{proof}
We only give a proof of the statement (2). For the proof of (1), see \cite[Lemma 3.2]{H}.

Suppose $\mu:M \to X$ is an $\cm$-preenvelope.
Let $f:Z \to M^\dagger$ be a homomorphism with $Z \in \cm$.
Since $\mu$ is an $\cm$-preenvelope, there exists a homomorphism $g:X \to Z^\dagger$
such that $f^\dagger \d_M = g \mu$.
Applying $(-)^\dagger$, one has $(\d_M)^\dagger f^{\dagger\dagger} = \mu^\dagger g^\dagger$.
Hence we obtain equalities
$$
\mu^\dagger g^\dagger \d_Z = (\d_M)^\dagger f^{\dagger\dagger} \d_Z = (\d_M)^\dagger \d_{M^\dagger} f = f ,
$$
where the last equality follows from \cite[Lemma 3.1]{H}.
This shows that $\mu^\dagger$ is an $\cm$-precover.
Next consider the case that $\mu$ is a special $\cm$-preenvelope. Then it follows from (1) that $\Ext_R^1(\cm, (\cok\mu)^\dagger)=0$.
One has $(\cok \mu)^\dagger \cong \ker (\mu^\dagger)$ because $(-)^\dagger:\mod{R} \to \mod{R}$ is a left exact functor.
Hence, $\mu^\dagger$ is a special $\cm$-precover.
Finally we suppose that $\mu$ is an $\cm$-envelope. Let $\phi$ be an endomorphism of $X^\dagger$ with $\mu^\dagger \phi = \mu^\dagger$. 
Then the commutative diagram
$$
\xymatrix
{
X \ar[r]_{\d_X}^{\cong} & X^{\dagger\dagger} \ar[dr]^{\phi^\dagger}   \\
M \ar[r]_{\d_M} \ar[u]_\mu \ar@/_23pt/[rr]_{\d_X \circ \mu} & M^{\dagger\dagger} \ar[u]_{\mu^{\dagger\dagger}} \ar[r]_{\mu^{\dagger\dagger}} & X^{\dagger\dagger}
}
$$
implies $\phi^\dagger$ is an automorphism and so is $\phi=(\d_{X^\dagger})^{-1} \phi^{\dagger\dagger} \d_{X^\dagger}$. Therefore, $\mu^\dagger$ is an $\cm$-cover.
\end{proof}

\begin{lem}\label{lem2}
Let $C$ be an $R$-module. Consider an exact sequence $0 \to L \xrightarrow{f} C \xrightarrow{g} N \to 0$ with $\cod L>0$ such that $N$ satisfies $(S_1)$.
Such an exact sequence is, if exists, unique up to isomorphisms of complexes with respect to $C$. 
\end{lem}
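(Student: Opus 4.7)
The plan is as follows. Suppose we are given two exact sequences
\[
0 \to L_i \xrightarrow{f_i} C \xrightarrow{g_i} N_i \to 0 \qquad (i=1,2),
\]
both satisfying the stated hypotheses. The entire task reduces to showing that the submodules $f_1(L_1)$ and $f_2(L_2)$ of $C$ coincide; once this is done, the identity on $C$ induces mutually inverse isomorphisms between the two sequences, which is exactly the uniqueness claim.

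The argument rests on two standard translations of the hypotheses into torsion statements, both using that $R$ is Cohen-Macaulay, so that $\ass R$ coincides with the set of minimal primes of $R$. First, $\cod L_i > 0$ says $\h(\operatorname{Ann} L_i)>0$, so $\operatorname{Ann} L_i$ is contained in no associated prime of $R$, and prime avoidance produces a non-zerodivisor $x_i\in R$ with $x_i L_i=0$. Second, the condition that $N_i$ satisfies $(S_1)$ forces $\ass N_i\subseteq\ass R$, which is equivalent to $N_i$ being torsion-free over $R$ in the sense that no nonzero element is annihilated by a non-zerodivisor of $R$; indeed such an element would generate a cyclic submodule whose associated primes lie in $\ass N_i$ yet must all contain the chosen non-zerodivisor, contradicting $\ass N_i\subseteq\ass R$.

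The main step is then to show that the composite $\psi := g_2\circ f_1\colon L_1\to N_2$ is zero. For any $\ell\in L_1$ one computes $x_1\psi(\ell)=\psi(x_1\ell)=0$, so $\psi(\ell)$ is a torsion element of the torsion-free module $N_2$, hence $\psi(\ell)=0$. This gives $f_1(L_1)\subseteq \ker g_2 = f_2(L_2)$, and the symmetric argument with the indices swapped yields the reverse inclusion. Thus $f_1(L_1)=f_2(L_2)$ inside $C$, and the induced maps on kernels and cokernels are the required isomorphisms of complexes.

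No serious obstacle is anticipated; everything reduces to the two translations above, which are routine commutative algebra. The one point that requires care is that the paper's convention for $(S_1)$ really produces torsion-freeness over $R$ rather than merely the absence of embedded primes inside $\supp N$. Under the convention $\depth N_\p\ge \min(k,\h\p)$ that is standard for modules over a Cohen-Macaulay ring this is automatic, so the proof above goes through verbatim.
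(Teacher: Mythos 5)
Your proof is correct and follows the same skeleton as the paper's: the heart of the lemma is showing that the cross-composite $L_1 \to C \to N_2$ vanishes, after which the two sequences are identified over the identity of $C$. Where you differ is in how that vanishing is obtained. The paper invokes \cite[Proposition 3.1]{AI} to see that $\d_{N_2}$ is injective, embeds $N_2$ into $\om^{\oplus r}$, and computes $\ass{\Hom_R(L_1,\om)}=\supp{L_1}\cap\ass{\om}=\emptyset$, so that $\Hom_R(L_1,N_2)=0$. You instead choose a non-zerodivisor annihilating $L_1$ (legitimate: a Cohen--Macaulay local ring is equidimensional, so $\cod L_1>0$ keeps $\operatorname{Ann} L_1$ out of every associated prime of $R$, and prime avoidance applies) and use that $N_2$ is torsion-free. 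Your route is more elementary, avoiding the canonical module and the cited proposition altogether, and the caveat you flag about the $(S_1)$ convention is exactly the right one: the argument needs $\ass{N}\subseteq\ass{R}$, not merely that $N$ has no embedded primes, and this is what the convention $\depth_{R_\p}N_\p\ge\min\{1,\h\p\}$ delivers. The only other (cosmetic) difference is that the paper builds comparison maps $s,s'$ in both directions and checks $s's=1=ss'$, whereas you identify the submodules $f_1(L_1)=f_2(L_2)$ directly; both yield the claimed isomorphism of complexes.
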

\begin{proof}
Let $0 \to L' \xrightarrow{f'} C \xrightarrow{g'} N' \to 0$ be a short exact sequence satisfying the same condition.
From \cite[Proposition 3.1]{AI}, $\d_{N}$ is a monomorphism.
Therefore, $N$ can be embedded in $\om^{\oplus r}$ for some integer $r$. 
Since $\ass{\Hom_R(L', \om)}=\supp{L'}\cap\ass{\om}=\emptyset$, we have $\Hom_R(L', \om)=0$ and thus, $\Hom_R(L', N)=0$.
Therefore, we have a commutative diagram
$$
\begin{CD}
0 @>>> L' @>f'>> C @>g'>> N' @>>> 0 \\
@. @VsVV  @| @VtVV @. \\
0 @>>> L @>f>> C @>g>> N @>>> 0.
\end{CD}
$$
Similarly, we have a commutative diagram
$$
\begin{CD}
0 @>>> L @>f>> C @>g>> N @>>> 0 \\
@. @Vs'VV  @| @Vt'VV @. \\
0 @>>> L' @>f'>> C @>g'>> N' @>>> 0.
\end{CD}
$$
These two commutative diagrams yield $f's's=f'$ and since $f'$ is a monomorphism, we conclude $s's=1$. 
Using the same argument, one has $ss'=1$. Thus, $s$ is an isomorphism and so is $t$.
\end{proof}

\begin{lem}\label{lem3}
Let $0 \to U \to L \xrightarrow{\alpha} M \to V \to 0$ be an exact sequence such that $\cod U \ge i$ and $\cod V \ge i+1$ for an integer $i$.
Then, we get an isomorphism $\Ext_R^l(M, \om) \cong \Ext_R^l(L, \om)$ for any integer $l<i$.
\end{lem}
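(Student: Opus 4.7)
My plan is to split the given 4-term exact sequence via the image $W := \im \alpha$ into the two short exact sequences $0 \to U \to L \to W \to 0$ and $0 \to W \to M \to V \to 0$, apply the contravariant functor $\Hom_R(-, \om)$ to each, and read off the desired isomorphism from the resulting long exact sequences of Ext. The one external ingredient I will need is the standard vanishing
$$
\Ext_R^j(N, \om) = 0 \text{ whenever } j < \cod N
$$
for any finitely generated $R$-module $N$; this holds because $R$ is Cohen-Macaulay with canonical module, so the $\om$-grade of $N$ coincides with $\h(\mathrm{ann}\,N) = d - \dim N = \cod N$.

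Carrying out the sketch, the first short exact sequence gives the four-term piece
$$
\Ext_R^{l-1}(U, \om) \to \Ext_R^l(W, \om) \to \Ext_R^l(L, \om) \to \Ext_R^l(U, \om),
$$
and for every $l < i$ the hypothesis $\cod U \ge i$ forces both outer terms to vanish, yielding $\Ext_R^l(W, \om) \cong \Ext_R^l(L, \om)$. The second short exact sequence produces analogously
$$
\Ext_R^l(V, \om) \to \Ext_R^l(M, \om) \to \Ext_R^l(W, \om) \to \Ext_R^{l+1}(V, \om),
$$
and for every $l < i$ the hypothesis $\cod V \ge i+1$ makes both outer terms vanish (note $l+1 \le i < i+1$), so $\Ext_R^l(M, \om) \cong \Ext_R^l(W, \om)$. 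Composing the two isomorphisms gives the claim.

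I do not foresee any real obstacle: once the vanishing $\Ext_R^j(N, \om) = 0$ for $j < \cod N$ is invoked as a standard consequence of local duality for Cohen-Macaulay rings with canonical module, the remainder is a short diagram chase through two long exact Ext sequences. The only thing demanding a moment of care is bookkeeping of indices to make sure $l-1$, $l$, and $l+1$ all land in the vanishing ranges prescribed by $\cod U \ge i$ and $\cod V \ge i+1$, and this is immediate from the assumption $l < i$.
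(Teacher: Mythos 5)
Your proposal is correct and follows essentially the same route as the paper: split the four-term sequence at $\im\alpha$, apply $(-)^\dagger=\Hom_R(-,\om)$ to the two resulting short exact sequences, and kill the outer Ext terms using the vanishing $\Ext_R^j(N,\om)=0$ for $j<\cod N$ (which the paper justifies via local duality, $\Ext_R^j(N,\om)\cong \H_\m^{d-j}(N)^{\vee}=0$ when $\dim N<d-j$, rather than via grade, but these are equivalent here). The index bookkeeping in your argument matches the paper's and is correct.
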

\begin{proof}
From the short exact sequence $0 \to U \to L \to \im \alpha \to 0$,
we have an exact sequence
$$
\Ext_R^{l-1}(U, \om) \to \Ext_R^{l}(\im \alpha, \om) \to \Ext_R^l(L, \om) \to \Ext^l(U, \om).
$$
Since $\dim{U} < d-l$, using the local duality theorem, we get $\Ext_R^l(U, \om) \cong \H_{\m}^{d-l}(U)^{\vee}=0$ for $l<i$, where $(-)^{\vee}$ stands for the Matlis dual. Therefore, $\Ext_R^{l}(\im \alpha, \om) \cong \Ext_R^l(L, \om)$ for $l<i$.
Similarly, because we have $\Ext_R^l(V, \om)=0$ for $l<i+1$ by the local duality theorem, using the same argument for $0 \to \im \alpha \to M \to V \to 0$, we get an isomorphism $\Ext_R^{l}(M, \om) \cong \Ext_R^l(\im \alpha, \om)$ for $l<i+1$.
Consequently, we obtain an isomorphism $\Ext_R^l(M, \om) \cong \Ext_R^l(L, \om)$ for $l<i$.
\end{proof}

Let $M$ be an $R$-module and $P_1 \xrightarrow{f} P_0 \to M \to 0$ a projective presentation of $M$. Then we put $\tr_\om M := \cok(f^\dagger:P_0^\dagger \to P_1^\dagger)$ and call it an $\om$-transpose of $M$.
The following lemma is well-known in the case where $\omega=R$.
The proof of this lemma is shown along the same lines as in that of \cite[Proposition 2.6]{ABr}.
\begin{lem}\label{lem4}
Let $M$ be an $R$-module. Then there exist isomorphisms
\begin{enumerate}[\rm(1)]
\item $\ker \d_M \cong \Ext_R^1(\tr_\om M, \om)$ and
\item $\cok \d_M \cong \Ext_R^2(\tr_\om M, \om)$.
\end{enumerate}
\end{lem}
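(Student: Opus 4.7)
The plan is to mimic the Auslander--Bridger argument from \cite[Proposition 2.6]{ABr}, with $\om$ playing the role of $R$; the key extra input is that $\Ext_R^{\geq 1}(\om,\om) = 0$, which holds because $\om$ is the canonical module of the Cohen--Macaulay ring $R$. Starting from a free presentation $P_1 \xrightarrow{f} P_0 \to M \to 0$, applying $(-)^\dagger$ gives a four-term exact sequence
$$
0 \to M^\dagger \to P_0^\dagger \xrightarrow{f^\dagger} P_1^\dagger \to \tr_\om M \to 0,
$$
which I split at $K := \im f^\dagger$ into the short exact sequences (I) $0 \to M^\dagger \to P_0^\dagger \to K \to 0$ and (II) $0 \to K \to P_1^\dagger \to \tr_\om M \to 0$.

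Each $P_i^\dagger$ is a finite direct sum of copies of $\om$, so all higher groups $\Ext_R^{\geq 1}(P_i^\dagger, \om)$ vanish. Dualizing (II) therefore yields
$$
0 \to (\tr_\om M)^\dagger \to P_1^{\dagger\dagger} \to K^\dagger \to \Ext_R^1(\tr_\om M, \om) \to 0,
$$
together with the isomorphism $\Ext_R^1(K, \om) \cong \Ext_R^2(\tr_\om M, \om)$, while dualizing (I) yields
$$
0 \to K^\dagger \to P_0^{\dagger\dagger} \to M^{\dagger\dagger} \to \Ext_R^1(K, \om) \to 0.
$$
Thus $P_0^{\dagger\dagger} \to M^{\dagger\dagger}$ has kernel $K^\dagger$ and cokernel $\Ext_R^2(\tr_\om M, \om)$.

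To conclude, I invoke naturality of $\d$ in the commutative square
$$
\begin{CD}
P_0 @>>> M \\
@V\d_{P_0}V\cong V @VV\d_M V \\
P_0^{\dagger\dagger} @>>> M^{\dagger\dagger}
\end{CD}
$$
in which the top arrow is surjective and the left vertical is an isomorphism. Reading off the cokernel gives $\cok \d_M \cong \Ext_R^2(\tr_\om M, \om)$, proving (2). For (1), the analogous square for $P_1, P_1^{\dagger\dagger}$ together with $\d_{P_1} \cong$ identifies $\d_{P_0}(\im f)$ with $\im f^{\dagger\dagger} = \im(P_1^{\dagger\dagger} \to K^\dagger)$ inside $K^\dagger$; hence the preimage description of $\ker \d_M$ in $P_0$ modulo $\im f$, transported across $\d_{P_0}$, becomes $\ker \d_M \cong K^\dagger / \im(P_1^{\dagger\dagger} \to K^\dagger) = \Ext_R^1(\tr_\om M, \om)$. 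The main obstacle is purely bookkeeping: pinning down that $\im f^{\dagger\dagger}$ lives inside $K^\dagger \subseteq P_0^{\dagger\dagger}$ with quotient exactly $\Ext_R^1(\tr_\om M, \om)$, so that the chase produces the desired Ext group. Beyond this the argument is a routine diagram chase once the vanishing $\Ext_R^{\geq 1}(\om, \om) = 0$ is in hand.
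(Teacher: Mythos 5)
Your proof is correct and is exactly what the paper intends: the paper gives no details, saying only that the lemma ``is shown along the same lines as in that of \cite[Proposition 2.6]{ABr}'', and your argument is precisely that Auslander--Bridger computation with $\om$ in place of $R$, the required inputs ($\Ext_R^{\ge 1}(\om,\om)=0$ and $\d_{P_i}$ an isomorphism for free $P_i$) being standard properties of the canonical module over a Cohen--Macaulay local ring. The bookkeeping step you flag for (1) does go through, since $f^{\dagger\dagger}$ factors as $P_1^{\dagger\dagger}\to K^\dagger\hookrightarrow P_0^{\dagger\dagger}$ with $\cok(P_1^{\dagger\dagger}\to K^\dagger)\cong\Ext_R^1(\tr_\om M,\om)$ by the dualized sequence (II).
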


We are now ready to show our first main result.

\begin{proof}[Proof of Theorem \ref{main1}]
(a)$\Rightarrow$(b): By Lemma \ref{lem1}, $\mu^\dagger:X^\dagger \to M^\dagger$ is a special $\cm$-precover. In particular, $\mu^\dagger$ is an epimorphism.
Since $\d_X \mu = \mu^{\dagger\dagger} \d_M$ and $\mu^{\dagger\dagger}$ is a monomorphism, $\ker \d_M \cong \ker \mu$.
Using Lemma \ref{lem4}, one has $\ker \mu \cong \Ext_R^1(\tr_\om M, \om)$. 
Because $\om_{\p}$ is an injective $R_{\p}$-module for any minimal prime ideal $\p$, we conclude $\cod (\ker \mu)>0$.

Next, consider the case (2).
Then $\mu^\dagger$ is an $\cm$-cover, and hence there exists an exact sequence $0 \to Y \xrightarrow{f} X^\dagger \xrightarrow{\mu^\dagger} M^\dagger \to 0$ such that $Y$ has finite injective dimension and $f \in \rad_R(Y, X^\dagger)$.
Applying $(-)^\dagger$, we get an exact sequence 
$$
0 \to M^{\dagger\dagger} \xrightarrow{\mu^{\dagger\dagger}} X^{\dagger\dagger} \xrightarrow{f^\dagger} Y^\dagger \to \Ext_R^1(M^\dagger, \om) \to 0.
$$
Then $f^\dagger$ can be decomposed into an epimorphism $g:X^{\dagger\dagger} \to \cok (\mu^{\dagger\dagger})$ and a monomorphism $h:\cok (\mu^{\dagger\dagger}) \to Y^\dagger$.
Since $\depth_{R_\p}{(M^\dagger)_\p} \ge \mn \{\h \p, 2 \}$, $M^\dagger$ satisfies $(S_2)$, and therefore we get $\cod (\Ext_R^1(M^\dagger, \om))> 2$.
Hence $h^\dagger$ is an isomorphism by Lemma \ref{lem3}.
By the depth lemma, $Y$ satisfies $(S_3)$ and hence \cite[Proposition 3.1]{AI} implies that $\d_Y$ is an isomorphism.
Therefore, $f\in \rad_R(Y, X^\dagger)$ yields $f^{\dagger\dagger}=g^\dagger h^\dagger \in \rad_R(Y^{\dagger\dagger}, X^{\dagger\dagger\dagger})$, and hence $g^\dagger \in \rad_R((\cok(\mu^{\dagger\dagger}))^\dagger, X^{\dagger\dagger\dagger})$.
Because $g^\dagger \in \rad_R((\cok(\mu^{\dagger\dagger}))^\dagger, X^{\dagger\dagger\dagger})$, $\cok(\mu^{\dagger\dagger})$ and $X^{\dagger\dagger\dagger}$ has no non-zero common direct summand via $g^\dagger$.
This shows that $X^{\dagger\dagger}$ and $\cok (\mu^{\dagger\dagger})$ has no non-zero common direct summand via $g$.
Consequently, $\cok(\mu^{\dagger\dagger})$ has no non-zero free summand.

(b)$\Rightarrow$(c): By the local duality theorem, we have $(\ker\mu)^\dagger=0$.
Since $(\ker\mu)^\dagger=0=\Ext_R^1(\cok\mu, \om)$, $\mu^\dagger$ is an epimorphism.
Taking a short exact sequence $0 \to Y \to X^\dagger \xrightarrow{\mu^\dagger} M^\dagger \to 0$ and applying $(-)^\dagger$ to this sequence, we obtain an exact sequence
$$
0 \to M^{\dagger\dagger} \xrightarrow{\mu^{\dagger\dagger}} X^{\dagger\dagger} \to Y^\dagger \to \Ext_R^1(M^\dagger, \om) \to 0.
$$
Because  $\d_X \mu = \mu^{\dagger\dagger} \d_M$, we get another exact sequence
$$
\ker (\mu^{\dagger\dagger}) =0 \to \cok\d_M \to \cok \mu \to \cok (\mu^{\dagger\dagger}) \to 0 .
$$
Combining this two sequences, we have an exact sequence
$$
0 \to \cok\d_M \to \cok \mu \xrightarrow{\alpha} Y^\dagger \to \Ext_R^1(M^\dagger, \om) \to 0.
$$
We verify that this sequence satisfies the condition (c).

Since $\om_\p$ has injective dimension at most 1 for any prime ideal $\p$ with $\h \p \le 1$, $\cok \d_M \cong \Ext_R^2(\tr_\om, \om)$ has codimension at least 2.
As $M^\dagger$ satisfies $(S_2)$, $\cod (\Ext_R^1(M^\dagger, \om))>2$.
Because $Y$ satisfies $(S_3)$, $\d_Y$ is an isomorphism.
Hence $(Y^\dagger)^\dagger \cong Y$ satisfies $(S_3)$.
By Lemma \ref{lem1}(1), one has $\Ext_R^1(\cm, (\cok\mu)^\dagger)=0$, 
that is, $(Y^\dagger)^\dagger \cong (\cok \mu)^\dagger$ has finite injective dimension.
Consequently, $\mu$ satisfies the conditions in $(c)$.
Moreover, since $\im \alpha \cong \cok (\mu^{\dagger\dagger})$, the implication (b) $\Rightarrow$ (c) also holds in the case of (2).

(c) $\Rightarrow$ (a): First, we prove $\Ext_R^1(\cok\mu, \cm)=0$.
By assumption, there exists an exact sequence
$$
0 \to S \to \cok{\mu} \xrightarrow{\alpha} T \to U \to 0
$$
which satisfies the conditions in (c).
By the local duality theorem, we have $S^\dagger=0=\Ext_R^1(S, \om)$, hence $\Ext_R^1(S, \cm)=0$. Therefore we have only to prove $\Ext_R^1(\im \alpha, \cm)=0$.
Since $(\im\alpha)^\dagger \cong T^\dagger$ has finite injective dimension, one has $\Ext_R^1(\cm, (\im \mu)^\dagger)=0$.
The short exact sequence $0 \to \im \alpha \to T \to U \to 0$ induces an exact sequence
$$
\Ext_R^1(T, \om) \to \Ext_R^1(\im \alpha, \om) \to \Ext_R^2(U, \om).
$$
Since $\cod U >2$, $\Ext_R^2(U, \om)=0$. 
From \cite[Proposition 3.1]{AI}, we get $\Ext_R^1(T^{\dagger\dagger}, \om)=0$ and $T \cong T^{\dagger\dagger}$. Therefore, $\Ext_R^1(\im \alpha, \om)=0$.
Using Lemma \ref{lem1}, we conclude $\Ext_R^1(\im \alpha, \cm)=0$.

Next, we show that $\mu$ is a special $\cm$-preenvelope.
Let $Z$ be a maximal Cohen-Macaulay $R$-module and $f:M \to Z$ an $R$-homomorphism. Since $\ass{\Hom_R(\ker \mu, Z)}=\supp(\ker \mu) \cap \ass{Z}= \emptyset$, $f$ can be lifted to $\im \mu$. Furthermore, because $\Ext_R^1(\cok \mu, Z)=0$, the morphism $\im \mu \to Z$ can be lifted to $X$.
This shows that $\mu$ is an $\cm$-preenvelope.

Consider the case (2).
Using \cite[Theorem A]{H}, there exists an $\cm$-envelope $\mu':M \to X'$.
We show that $\mu$ is isomorphic to $\mu'$.
As we have already seen, $\mu$ is a special $\cm$-preenvelope. Hence there exists a commutative diagram
$$
\begin{CD}
M @>\mu'>> X' @>>> \cok \mu' @>>> 0 \\
@| @VfVV @V\overline{f}VV \\
M @>\mu>> X @>>> \cok \mu @>>> 0 \\
@| @VgVV @V\overline{g}VV \\
M @>\mu'>> X' @>>> \cok \mu' @>>> 0 
\end{CD}
$$
and since $\mu'$ is an $\cm$-envelope, $g f$ and $\overline{g} \overline{f}$ are automorphisms.
Consider the chain map:
$$
\begin{CD}
F @.\ =\ @.(@. 0 @>>> M @>\mu'>> X' @>>> \cok \mu' @>>> 0 @.) \\
@V{u}VV @.@. @VVV @| @VfVV @V\overline{f}VV @VVV \\
G @.\ =\ @.(@. 0 @>>> M @>\mu>> X @>>> \cok \mu @>>> 0 @.) .
\end{CD}
$$
Then the cokernel of $u$ is of the form $\cok u = (0 \to 0 \to \cok f \to \cok \overline{f} \to 0)$ and by calculating the homologies of $\cok u$, we conclude $\cok f \cong \cok \overline{f}$.
Since $f$ and $f'$ are split monomorphisms, $\cok f \cong \cok \overline{f}$ is a maximal Cohen-Macaulay and $(\cok f)^\dagger \cong (\cok \overline{f})^\dagger$ has finite injective dimension by Lemma \ref{lem1}(1). 
This shows that $\cok f \cong \cok \overline{f}$ is a free $R$-module.
From the assumption, there exists an exact sequence 
$$
0 \to S \to \cok \mu \xrightarrow{\alpha} T \to U \to 0 
$$
which satisfies the conditions in (c).
On the other hand, as we have shown in the proof of the implications (a)$\Rightarrow$(b)$\Rightarrow$(c), $\cok \mu'$ also admits an exact sequence 
$$
0 \to S' \to \cok \mu' \xrightarrow{\alpha'} T' \to U' \to 0 
$$
which satisfies the conditions in (c).
Then we obtain two short exact sequences 
\begin{align*}
0 \to S \to \cok \mu \to \im \alpha \to 0, \tag{1} \\
0 \to S' \to \cok \mu' \to \im {\alpha'} \to 0. \tag{2}
\end{align*}

Since $\cok \mu \cong \cok \mu' \oplus \cok \overline{f}$,  we obtain a short exact sequence \begin{align*}
0 \to S' \to \cok \mu \to \im {\alpha'} \oplus \cok \overline{f} \to 0 \tag{2'}
\end{align*}
from (2) by taking the direct sum with $\cok \overline{f}$.
By assumption, $S$ and $S'$ have codimension at least 2. 
Since $T$ and $T'$ satisfy $(S_1)$, those submodules $\im {\alpha}$ and $\im {\alpha'}$ are also satisfy $(S_1)$.  
Using Lemma \ref{lem2}, (1) and (2') are isomorphic as complexes. In particular, $ \im{\alpha'} \oplus \cok \overline{f}$ is isomorphic to $\im \alpha$.
Because $\im \alpha$ has no non-zero free summand, the free module $\cok \overline{f}$ is $0$.
Consequently, $\mu$ is isomorphic to $\mu'$.
\end{proof}
Let us construct examples of special $\cm$-preenvelopes and $\cm$-envelopes by using Theorem \ref{main1}.
\begin{ex}
\begin{enumerate}[\rm(1)]
\item Let $R$ be a $d$-dimensional Cohen-Macaulay local ring with canonical module $\om$ and $\underline{x}=x_1, x_2, \ldots x_n$ an $R$-regular sequence with $n \ge 3$.
Consider an exact sequence
$$
0 \to M \xrightarrow{\mu} R^{\oplus n} \xrightarrow{(x_1, \ldots, x_n)} R \to R/(\underline{x}) \to 0.
$$
Since the cokernel of $\mu$ admits an exact sequence $0 \to \cok \mu \to R \to R/(x) \to 0$ which satisfies the conditions in (c) in Theorem \ref{main1}, $\mu: M \to R^{\oplus n}$ is a special $\cm$-preenvelope.
\item Let $R$ be a $d$-dimensional Cohen-Macaulay local domain with canonical module $\om$ such that $d \ge 2$.
Take an $R$-sequence $x, y$ and a non-zero element $f \in R$.
Consider a commutative diagram
$$
\begin{CD}
0 @>>> R @>\binom{xf}{yf}>> R^{\oplus 2} @>>> M @>>> 0 \\
@. @VVfV @| @VVgV @. \\
0 @>>> R @>\binom{x}{y}>> R^{\oplus 2} @>>> (x,y) @>>> 0.
\end{CD}
$$
Using the snake lemma, we have a short exact sequence $0 \to R/(f) \to M \xrightarrow{g} (x,y) \to 0$.
Let $\mu:M \to R$ be the composition of $g:M \to (x, y)$ and the inclusion $(x, y) \to R$.
Then $\ker \mu \cong \ker g \cong R/(f)$ has codimension at least $1$, and $\cok \mu \cong R/(x, y)$ has codimension at least 2 and $R/(x, y)$ has no free summand.  Therefore, $\mu$ is an $\cm$-envelope.   
\end{enumerate}
\end{ex}


\begin{rem}
For each $Y \in \mod{R}$, $Y$ has finite injective dimension if and only if $Y$ admits an exact sequence $0 \to W_n \to W_{n-1} \to \cdots \to W_0 \to Y \to 0$ with $W_i \in \add{\om}$ and $n = d- \depth_{R}{Y}$.
Therefore, for an $R$-homomorphism $\pi: X \to M$ with $X \in \cm$, the following are equivalent.
\begin{enumerate}[(1)]
\item $\pi$ is a special $\cm$-precover.
\item There exists an exact sequence $0 \to W_{n} \to W_{n-1} \to \cdots \to W_1 \to X \xrightarrow{\pi} M \to 0$ with $W_i \in \add{\om}$ and $n=d-\depth_{R}{M}$.
\end{enumerate}
\end{rem}
Now, let us give a proof of Theorem \ref{main2}
\begin{proof}[Proof of Theorem \ref{main2}]
(1) $\Rightarrow$ (2):
Because of Lemma \ref{lem1}, $\mu^\dagger:X^\dagger \to M^\dagger$ is a special $\cm$-precover and there exists an exact sequence
\begin{align*}\label{***}
0 \to W_{d-2} \xrightarrow{f_{d-2}} W_{d-3} \xrightarrow{f_{d-3}} \cdots \xrightarrow{f_2}  W_1 \xrightarrow{f_1} X^\dagger \xrightarrow{\mu^\dagger} M^\dagger \to 0 \tag{*}
\end{align*}
such that $W_i \in \add{\om}$. 
Applying $(-)^\dagger$, we obtain an $R$-complex 
$$
C=(0 \to M \xrightarrow{\mu= \d_X^{-1} \mu^{\dagger\dagger} \d_M} X \xrightarrow{f_1^\dagger \d_X} W_1^\dagger \xrightarrow{f_2^\dagger} \cdots \xrightarrow{f_{d-2}^\dagger} W_{d-2}^\dagger \to 0).
$$
Decompose \eqref{***} into short exact sequences $0 \to U_i \xrightarrow{u_i} W_i \xrightarrow{v_i} U_{i-1} \to 0\,\,(0\le i \le d-2)$ where $W_0 := X^\dagger$ and $U_{-1} := M^\dagger$. Applying $(-)^\dagger$ to these sequences, we obtain exact sequences 
$0 \to U_{i-1}^\dagger \xrightarrow{v_i^\dagger} W_i^\dagger \xrightarrow{u_i^\dagger} U_i^\dagger \to \Ext_R^1(U_{i-1}, \om) \to 0$. 
Since $v_i^\dagger$ are monomorphisms, 
$$
\H^i(C) = \ker{f_{i+1}^\dagger}/\im{f_i^\dagger}
= \ker{u_i^\dagger}/\im{f_i^\dagger} 
\cong \cok{u_{i-1}^\dagger} 
\cong \Ext_R^1(U_{i-2}^\dagger, \om)
$$
for $1 \le i \le d-2$.
From the exact sequence \eqref{***}, $(U_{i-2})_\p$ is a maximal Cohen-Macaulay $R_\p$-module for any prime ideal $\p$ with $\h{\p} \le i+1$ by the depth lemma. Hence $\cod \H^i(C) >i+1$ for $i \ge1$. 
For $i=-1, 0$, 
\begin{align*}
\H^{-1}(C) &= \ker(\mu^{\dagger\dagger}\d_M) 
= \ker \d_M 
\cong \Ext_R^1(\tr_\om M, \om) \\
\H^{0}(C) &= \ker f_1^\dagger / \im(\mu^{\dagger\dagger} \d_M) 
\cong \cok{\d_M} 
\cong \Ext_R^2(\tr_\om M, \om).
\end{align*}
Therefore, $\cod \H^i(C) > i+1$ for any $i$.

(2) $\Rightarrow$ (1):
First note that if $d \le 2$, then this implication holds.
Indeed, since $\cod(\ker \mu)>0$ and $\cod(\cok \mu)>1$, $\mu$ is a special $\cm$-preenvelope by Theorem \ref{main1}.
 
Next, consider the case $d\ge 3$.
Put $d^0:=\mu$ and denote by $\mu^i$ the induced homomorphism $\cok{d^{i-1}} \to C^{i}$ for each $1\le i \le d-2$.
Note that there exists an exact sequence
$$
0 \to \H^{i-1}(C) \to \cok{d^{i-1}} \xrightarrow{\mu^{i}} C^{i} \to \cok d^i \to 0.
$$
Let us show $\Ext_R^1(\cok d^i, \cm)=0$ for $0 \le i \le d-2$.
By assumption, $\cok d^{d-2} \cong \H^{d-2}(C)$ and $\H^{d-3}(C)$ have codimension at least 2. Hence $\Ext_R^1(\cok d^{d-2}, \cm) =0= \Ext_R^1(\H^{d-3}(C), \cm)$.
From the exact sequence 
$$
0 \to \im \mu^{d-2} \to C^{d-2} \to \cok d^{d-2} \to 0,
$$
we have $\Ext_R^1(\im \mu^{d-2}, \cm)=0$.
Hence one has $\Ext_R^1(\cok d^{d-3}, \cm)=0$ from the exact sequence 
$$
0 \to \H^{d-3}(C) \to \cok d^{d-3} \to \im \mu^{d-2} \to 0.
$$
Iterating this procedure, we get $\Ext_R^1(\cok d^i, \cm)=0$ for $0 \le i \le d-2$.
Since $\Ext_R^1(\cok \mu, \cm)=0$ and $\ker \mu \cong \H^{-1}(C)$ has positive codimension, $\mu$ is a special $\cm$-preenvelope. 
\end{proof} 

\section{The structure of special proper $\cm$-coresolutions}
In this section, we study the structure of special proper $\cm$-coresolutions by using Theorem \ref{main1}.
From now on, we assume that $R$ is Henselian. 
The following lemma is the key to prove our last theorem.
\begin{lem}
Let  
$$
0 \to M \xrightarrow{d^0} X^0 \xrightarrow{d^1} X^1 \xrightarrow{d^2} \cdots
$$
be a special proper $\cm$-coresolution of an $R$-module $M$.
Put $\mu^0=d^0$, and let $\mu^i: \cok d^{i-1} \to X^i$ be the induced morphisms for $i \ge 1$. Then the following holds:
\begin{enumerate}[\rm(1)]
\item $\cok \mu^i$ are unique up to free summands with respect to $M$ and
\item $\ker \mu^i$ are unique up to isomorphisms with respect to $M$.
\end{enumerate}
\end{lem}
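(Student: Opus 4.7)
The approach is induction on $i$. The governing idea is that any special $\cm$-preenvelope of a module contains the $\cm$-envelope of that module as a direct summand of the target, and the complementary summand is forced to be free; the remainder is a bookkeeping reduction.

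For the base case $i = 0$, recall that the $\cm$-envelope $\nu : M \to E$ exists since $R$ is Henselian, and is itself a special $\cm$-preenvelope by Wakamatsu's lemma (since $\cm$ is extension-closed by the depth lemma). Given an arbitrary special $\cm$-preenvelope $\mu^{0} : M \to X^{0}$, the mutual preenvelope property produces $f : E \to X^{0}$ and $g : X^{0} \to E$ with $f \nu = \mu^{0}$ and $g \mu^{0} = \nu$; then $gf \nu = \nu$, so $gf$ is an automorphism of $E$, and $f$ is a split monomorphism. This yields $X^{0} \cong E \oplus F^{0}$ with $\mu^{0}$ corresponding to $(\nu, 0)$, so $\ker \mu^{0} \cong \ker \nu$ and $\cok \mu^{0} \cong \cok \nu \oplus F^{0}$. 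As $F^{0}$ is a maximal Cohen-Macaulay summand of $\cok \mu^{0}$ and $\Ext_{R}^{1}(\cok \mu^{0}, \cm) = 0$, Lemma \ref{lem1}(1) gives $\Ext_{R}^{1}(\cm, (F^{0})^{\dagger}) = 0$, i.e., $(F^{0})^{\dagger}$ has finite injective dimension; since $(F^{0})^{\dagger}$ is also maximal Cohen-Macaulay, it is isomorphic to $\om^{n}$ for some $n$, and dualizing shows $F^{0} \cong R^{n}$ is free. Comparing two special $\cm$-preenvelopes of $M$ this way, both have kernel $\ker \nu$ and cokernels equal to $\cok \nu$ plus some free module.

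For the inductive step, assume the conclusions hold through index $i - 1$. Given two special proper $\cm$-coresolutions, write $N := \cok \mu^{i-1}$ and $N' := \cok {\mu'}^{i-1}$; by the inductive hypothesis, $N \oplus G \cong N' \oplus G'$ for some free modules $G, G'$. The maps $\mu^{i} \oplus 1_{G} : N \oplus G \to X^{i} \oplus G$ and ${\mu'}^{i} \oplus 1_{G'} : N' \oplus G' \to {X'}^{i} \oplus G'$ are again special $\cm$-preenvelopes, with the same kernels and cokernels as $\mu^{i}$ and ${\mu'}^{i}$ respectively (both properties are preserved under direct-summing with the identity on a free module, since $\Ext_{R}^{1}(\cm, -)$ kills free modules and preenvelope lifting is componentwise). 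The base case argument, applied to the common module $N \oplus G \cong N' \oplus G'$, then compares these two special $\cm$-preenvelopes and delivers the desired conclusions for $\mu^{i}$ and ${\mu'}^{i}$.

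The main obstacle is the base case, specifically the step identifying $F^{0}$ as free. This rests on the classical fact that a maximal Cohen-Macaulay module of finite injective dimension over a Cohen-Macaulay local ring with canonical module is a direct sum of copies of $\om$, invoked through $(-)^{\dagger}$; it is the same device that underlies the final paragraph of the proof of Theorem \ref{main1}(2). Beyond this, the argument is a formal reduction absorbing any free discrepancy between the $(i-1)$-th cokernels into identity maps on free summands.
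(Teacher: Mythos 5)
Your proof is correct and follows essentially the same route as the paper: the paper likewise reduces to the fact (extracted from the proof of Theorem \ref{main1}, (c)$\Rightarrow$(a)) that any special $\cm$-preenvelope of a fixed module differs from the $\cm$-envelope by a split maximal Cohen--Macaulay summand whose $\om$-dual has finite injective dimension and which is therefore free, and then propagates this through the coresolution by observing that $\mu \oplus 1_F$ is again a special $\cm$-preenvelope with unchanged kernel and cokernel. Your write-up merely makes explicit the details the paper cites from Theorem \ref{main1}, and replaces the paper's appeal to Lemma \ref{lem3} for the kernels by the direct observation that $\ker\mu^0 = \ker\nu$ since the comparison map is injective; both are fine.
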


\begin{proof}
Set $M^i :=\cok d^{i-1}\,\,(i \ge 1)$ and $M^0:=M$.

$(1)$: As we saw in  the proof of the implication (c)$\Rightarrow$(a) in Theorem \ref{main1}(2), for any $R$-module $M$, the cokernel of a special $\cm$-preenvelope of $M$ is unique up to free summands.
On the other hand, for a special $\cm$-preenvelope $\mu:M \to X$ and a free module $F$, 
$\mu \oplus F: M \oplus F \to X \oplus F$ is also a special $\cm$-preenvelope since special $\cm$-preenvelopes are characterized only by their kernels and cokernels by Theorem \ref{main1}. 
Consequently, $\cok \mu^i$ are unique up to free summands.

$(2)$: Let $M$ be an $R$-module and $F$ a free module.
Then the kernels of special $\cm$-preenvelopes of $M$ and $M \oplus F$ are isomorphic by using Lemma \ref{lem3}.
This shows $(2)$.
\end{proof}
Now, let us prove Theorem \ref{main3} given in the introduction.
\begin{proof}[Proof of Theorem \ref{main3}]
To prove this theorem, we have only to construct a such special proper $\cm$-coresolution.

Take a special $\cm$-preenvelope $\mu^0:M \to X^0$. 
Then $\ker \mu^0$ and $\cok \mu^0$ satisfy the desired conditions by Theorem \ref{main1}. 
For $i>1$, assume that there exists an $R$-complex 
$$
0 \to M \xrightarrow{d^0} X^0 \xrightarrow{d^1} X^1 \xrightarrow{d^2} \cdots \xrightarrow{d^{i-1}} X^{i-1}
$$
which satisfies the desired conditions for $j < i$.
By assumption, there exists an exact sequence 
$$
0 \to S^{i-1} \to \cok \mu^{i-1} \xrightarrow{u} T^{i-1} \to U^{i-1} \to 0 
$$
which satisfies the conditions in the statement.
Since $(T^{i-1})^\dagger$ has finite injective dimension, we can choose a short exact sequence $0 \to A \to W \to (T^{i-1})^\dagger \to 0$ where $W \in \add{\om}$ and $A$ has finite injective dimension.
Applying $(-)^\dagger$, we have an exact sequence
$$
0 \to T^{i-1} \xrightarrow{v} W^\dagger \to A^\dagger \to \Ext_R^1((T^{i-1})^\dagger, \om) \to 0.
$$
Set $\mu^i=vu:\cok d^{i-1} \to W^\dagger$ and we show it satisfies the conditions in the statement.
Then there exists an exact sequence
$$
\ker v =0 \to \cok u \to \cok \mu^i \to \cok v \to 0.
$$
Splicing this sequence with $0 \to \cok v \to A^\dagger \to \Ext_R^1((T^{i-1})^\dagger, \om) \to 0$, we obtain an exact sequence
$$
0 \to \cok u \to \cok \mu^i \to A^\dagger \to \Ext_R^1((T^{i-1})^\dagger, \om) \to 0.
$$
Set $S^i =\cok u=U^{i-1}$, $T^i =A^\dagger$ and $U^i =\Ext_R^1((T^{i-1})^\dagger, \om)$.
By assumption, $\ker \mu^i \cong \ker u \cong S^{i-1}$ has codimension at least $i+1$ and $S^i =\cok u = U^{i-1}$ has codimension at least $i+2$.
Since $(T^{i-1})^\dagger$ satisfies $(S_{i+2})$, $((T^{i-1})^\dagger)_\p$ is a maximal Cohen-Macaulay $R_\p$-module for any prime ideal $\p$ with $\h \p \le i+2$.
Hence $U^i =\Ext_R^1((T^{i-1})^\dagger, \om)$ has codimension at least $i+3$.
Note that $T^i =A^\dagger$ satisfies $(S_2)$.
Because $A$ satisfies $(S_{i+3})$, $\d_A$ is an isomorphism.
Therefore, $(T^i)^\dagger =A^{\dagger\dagger} \cong A$ has finite injective dimension and satisfies $(S_{i+3})$. 
By induction on $i$, the proof of theorem is completed.
\end{proof}

The following result is shown in \cite[Theorem C]{H} even if $R$ is not Henselian. It is also shown by examining the structure of an $\cm$-envelope concretely.
\begin{cor}
For any $R$-module $M$, the minimal proper $\cm$-coresolution of $M$ has length at most $d$.
\end{cor}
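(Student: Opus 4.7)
The plan is to apply Theorem \ref{main3} to a minimal proper $\cm$-coresolution and read off termination at the single index $i = d-1$. Because $\cm$ is closed under extensions (depth lemma), Wakamatsu's lemma (Remark 2.3(3)) implies that every $\cm$-envelope is a special $\cm$-preenvelope, so a minimal proper $\cm$-coresolution is itself a special proper $\cm$-coresolution, and the four-term exact sequence
$$0 \to S^i \to \cok \mu^i \to T^i \to U^i \to 0$$
from Theorem \ref{main3} is available for every $i$.

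Specializing to $i = d-1$, the bounds $\cod S^{d-1} > d$ and $\cod U^{d-1} > d+1$ strictly exceed $\dim R$, so both $S^{d-1}$ and $U^{d-1}$ must vanish and the sequence collapses to an isomorphism $\cok \mu^{d-1} \cong T^{d-1}$. Crucially, $(T^{d-1})^\dagger$ then satisfies $(S_{d+2})$---hence is maximal Cohen-Macaulay---and has finite injective dimension. I would next invoke two classical facts: first, any maximal Cohen-Macaulay module of finite injective dimension lies in $\add \om$, so $(T^{d-1})^\dagger \cong \om^{\oplus n}$ for some $n$; second, since $T^{d-1}$ satisfies $(S_2)$, \cite[Proposition 3.1]{AI} makes $\d_{T^{d-1}}$ an isomorphism. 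Dualizing then gives $T^{d-1} \cong R^{\oplus n}$, so $M^d := \cok \mu^{d-1}$ is free.

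Once $M^d$ is known to be maximal Cohen-Macaulay, its $\cm$-envelope may be taken to be the identity $M^d \to M^d$, and by uniqueness of envelopes the minimal proper coresolution has $X^d \cong M^d$ and $\mu^d = \mathrm{id}$; consequently $\cok \mu^d = 0$ and $X^j = 0$ for all $j > d$, giving length at most $d$. I expect the main obstacle to be a conceptual rather than computational one: pinpointing $i = d-1$ as the right index, which is exactly where the codimension bounds of Theorem \ref{main3} saturate (killing $S^{d-1}$ and $U^{d-1}$) and simultaneously where the $(S_{i+3})$-condition on $(T^i)^\dagger$ first coincides with being maximal Cohen-Macaulay, so that the $\add \om$-rigidity kicks in and forces freeness. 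Matching these two phenomena is the heart of the argument; the remaining steps are formal.
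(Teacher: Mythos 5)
Your proposal is correct and follows essentially the same route as the paper: both apply Theorem \ref{main3} (valid for the minimal coresolution since, by Wakamatsu's lemma, it is a special proper $\cm$-coresolution) at the index $i=d-1$, where the codimension bounds force $S^{d-1}=U^{d-1}=0$, so that $\cok\mu^{d-1}\cong T^{d-1}$ with $(T^{d-1})^\dagger$ maximal Cohen-Macaulay of finite injective dimension, hence in $\add\om$, and the $(S_2)$-condition gives $T^{d-1}\cong (T^{d-1})^{\dagger\dagger}\cong R^{\oplus n}$, whence the coresolution terminates at $X^d$. The only cosmetic difference is that you spell out the final step (the envelope of a free module being the identity) and the Wakamatsu reduction, which the paper leaves implicit.
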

\begin{proof}
For an $R$-module $M$, take a minimal proper $\cm$-coresolution
$$
0 \to M \xrightarrow{\mu^{0}} X^0 \xrightarrow{d^1} X^1 \xrightarrow{d^2} \cdots.
$$
Let $\mu^i: \cok d^{i-1} \to X^i$ be the induced morphism.
For $i=d-1$, there exists an exact sequence
$$
0 \to S^{d-1} \to \cok d^{d-1} \to T^{d-1} \to U^{d-1} \to 0 
$$
which satisfies the conditions in the statement of Theorem \ref{main1}.
Since $S^{d-1}$ and $U^{d-1}$ have codimension at least $d+1$, $\cok d^{d-1}$ is isomorphic to $T^{d-1}$.
On the other hand, $(T^{d-1})^\dagger$ satisfies $(S_{d+2})$ and has finite injective dimension, i.e., $(T^{d-1})^\dagger \in \add{\om}$.
As $T^{d-1}$ satisfies $(S_2)$, $T^{d-1}$ is isomorphic to $(T^{d-1})^{\dagger\dagger}$, whence, free.
Consequently, the minimal proper $\cm$-coresolution ends in $X^d$.
\end{proof}

\section*{Acknowledgments}
The author is grateful to his supervisor Ryo Takahashi for a lot of comments, suggestions and discussions.

\end{document}